\newtheorem{theorem}{Theorem}
\newtheorem{lemma}{Lemma}
\newtheorem{remark}{Remark}
\theoremstyle{definition}
\newtheorem{definition}{Definition}
\newtheorem{example}{Example}
\newcommand{\norm}[1]{\left\|#1\right\|}
\newcommand{\cv}{\mbox{conv}} % convexity
\newcommand{\scl}{\mathop{\mbox{cl}}\limits^{\doublewedge}} % strong closure
\newcommand{\dnear}{\delta_{\Phi}} % descriptive proximity
\newcommand{\dcap}{\mathop{\cap}\limits_{\Phi}} % descriptive closure
\newcommand{\sn}{\mathop{\delta}\limits^{\doublewedge}} % strong proximity
\newcommand{\snd}{\mathop{\delta_{_{\Phi}}}\limits^{\doublewedge}} % snd descriptive proximity
\newcommand{\Cn}{\mbox{\large$\mathfrak{C}$}} % nucleus mesh cluster
\newcommand{\Cmn}{\mbox{max}\mathfrak{C}} % nucleus mesh cluster
\newcommand{\Cdn}{\mathfrak{C}_{\Phi}}% descriptive nucleus mesh cluster
\newcommand{\Cmdn}{\mbox{max}\mathfrak{C}_{\Phi}}% descriptive nucleus mesh cluster
\newcommand{\Csnd}{{\mathop{\mathfrak{C}}\limits^{\doublewedge}}_{\Phi}} % descriptive strong mesh cluster
\begin{document}

\title{Strongly Near Vorono\"{i} Nucleus Clusters}

\author[J.F. Peters]{J.F. Peters$^{\alpha}$}
\email{James.Peters3@umanitoba.ca, einan@adiyaman.edu.tr}
\address{\llap{$^{\alpha}$\,}Computational Intelligence Laboratory,
University of Manitoba, WPG, MB, R3T 5V6, Canada and
Department of Mathematics, Faculty of Arts and Sciences, Ad\.{i}yaman University, 02040 Ad\.{i}yaman, Turkey}
\author[E. Inan]{E. \.{I}nan$^{\beta}$}
\address{\llap{$^{\beta}$\,} Department of Mathematics, Faculty of Arts and Sciences, Ad\i yaman University, 02040 Ad\i yaman, Turkey and Computational Intelligence Laboratory,
University of Manitoba, WPG, MB, R3T 5V6, Canada}
\thanks{The research has been supported by the Scientific and Technological Research
Council of Turkey (T\"{U}B\.{I}TAK) Scientific Human Resources Development
(BIDEB) under grant no: 2221-1059B211402463 and the Natural Sciences \&
Engineering Research Council of Canada (NSERC) discovery grant 185986.}

\subjclass[2010]{Primary 54E05 (Proximity); Secondary 62H30 (Cluster Analysis), 68T10 (Pattern Recognition)}

\date{}

\dedicatory{Dedicated to the Memory of Som Naimpally}

\begin{abstract}
This paper introduces nucleus clustering in Vorono\"{i} tessellations of plane surfaces with applications in the geometry of digital images.  A \emph{nucleus cluster} is a collection of Vorono\"{i} regions that are adjacent to a Vorono\"{i} region called the cluster nucleus. Nucleus clustering is a carried out in a strong proximity space.  Of particular interest is the presence of maximal nucleus clusters in a tessellation.  Among all of the possible nucleus clusters in a Vorono\"{i} tessellation, clusters with the highest number of adjacent polygons are called \emph{maximal nucleus clusters}. The main results in this paper are that strongly near nucleus clusters are strongly descriptively near and every collection of Vorono\"{i} regions in a tessellation of a plane surface is a Zelins'kyi-Soltan-Kay-Womble convexity structure.
\end{abstract}

\keywords{Convexity Structure, Nucleus Clustering, Strong Proximity, Vorono\"{i} Tessellation}

\maketitle

\section{Introduction}
This paper introduces nucleus clustering in Vorono\"{i} tessellations of surfaces in Euclidean space $\mathbb{R}^d, d\geq 2$.  In this article, nucleus clustering is restricted to plane surfaces with applications in the geometry of digital images.  A \emph{nucleus cluster} is a collection of Vorono\"{i} regions that are adjacent to a Vorono\"{i} region called the cluster nucleus, which is a variation of the notion of a Harer-Edelsbrunner nerve~\cite[\S III.2, p. 59]{Edelsbrunner2010compTop}.

\setlength{\intextsep}{0pt}
\begin{wrapfigure}[9]{R}{0.35\textwidth}
%\fbox{
\begin{minipage}{5.2 cm}
%\begin{figure}[!ht]
\centering
\includegraphics[width=25mm]{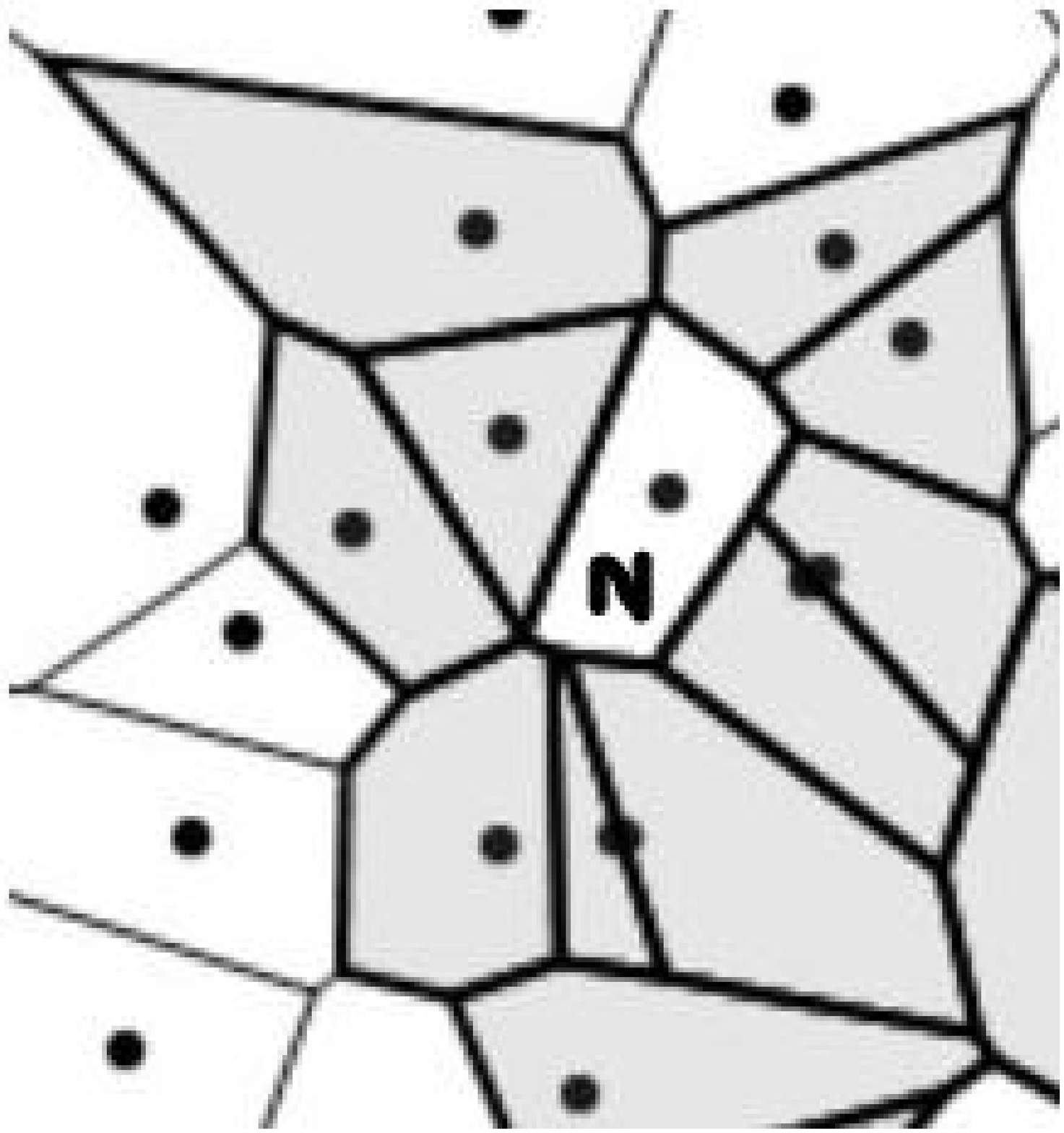}
\caption[]{$\mbox{}$\\ Nucleus Cluster}
\label{fig:nucleusCluster}
%\end{figure}
\end{minipage}
%}
\end{wrapfigure}

Every Vorono\"{i} region of a site $s$ is a convex polygon containing all points that are nearer $s$ than to any other site in a Vorono\"{i} tessellation of a surface. Vorono\"{i} regions are strongly near, provided the regions have points in common.  This form of clustering leads to the introduction of what are known as nucleus-clusters.  A \emph{nucleus cluster} is a collection of Vorono\"{i} regions that are strongly near a central Vorono\"{i} region called the cluster nucleus in a Vorono\"{i} tessellation.  A \emph{maximal nucleus cluster} is a collection of a maximal number of Vorono\"{i} regions that are strongly near the mesh nucleus. Maximal nucleus clusters serve as indicators of high object concentration  in a tessellated image.  This form of clustering leads to object recognition in many forms of application images. 

\section{Preliminaries}
%This section introduces strongly near proximity and Vorono\"{i} tessellation of a plane surface based on recent work on computational proximity~\cite{Peters2016ComputationalProximity}, computational geometry~\cite{Edelsbrunner1999,Edelsbrunner2001,Edelsbrunner2014,Edelsbrunner2010compTop}.   Strong proximities were introduced in~\cite{Peters2015AMSJmanifolds} and elaborated in~\cite{Peters2016ComputationalProximity} (see, also,~\cite{Inan2015}).  Nonempty sets $A$ and $B$ have strong proximity (denoted $A\ \sn\ B$), provided $A$ and $B$ have points in common.  Let $E$ be the Euclidean plane, $S\subset E$ (set of mesh generating points), $s\in S$.  
This section introduces strongly near proximity and Vorono\"{i} tessellation of a plane surface based on recent work on computational proximity~\cite{Peters2016ComputationalProximity}, computational geometry~\cite{Edelsbrunner1999,Edelsbrunner2001,Edelsbrunner2014,Edelsbrunner2010compTop}.   Strong proximities were introduced in~\cite{Peters2015AMSJmanifolds}, elaborated in~\cite{Peters2016ComputationalProximity} (see, also,~\cite{Inan2015}) and are a direct result of earlier work on proximities~\cite{DiConcilio2006,DiConcilio2013mcs,Naimpally1970,Naimpally2009,Naimpally2013}.  Nonempty sets $A$ and $B$ have strong proximity (denoted $A\ \sn\ B$), provided $A$ and $B$ have points in common.  Let $E$ be the Euclidean plane, $S\subset E$ (set of mesh generating points), $s\in S$.
A Vorono\"{i} region (denoted by $V(s)$) is defined by
\[
V(s) = \left\{x\in E: \norm{x - s}\leq \norm{x - q}, \mbox{for all}\ q\in S\right\}\ \mbox{(Vorono\"{i} region)}.
\]

%\begin{figure}[!ht]
%\centering
%\includegraphics[width=20mm]{./images/nucleusClusterGreen}
%\caption[]{$\sn$ proximity}
%\label{fig:nucleusCluster}
%\end{figure}

\begin{example}
A partial view of a Vorono\"{i} tessellation of a plane surface is shown in Fig.~\ref{fig:nucleusCluster}.  The Vorono\"{i} region $N$ in this tessellation is the nucleus of a mesh clustering containing all of those polygons adjacent to $N$.  Let $X$ be a collection of Vorono\"{i} regions containing $N$, endowed with the strong proximity $\sn$.  Briefly, a proximity relation is strong, provided $A\ \sn\ B, A,B\in X$ have points in common.  
%The closure of a Vorono\"{i} region $A$ is the set of all regions $B\in X$ that are near $A$, {\em i.e.}, $\cl A = \left\{B\in A: B\ \near\ A\right\}$.  A strong form of the closure of a region $A$ (denoted by $\scl A$) is the set of all regions that have points in common with $A$.  
Then the nucleus mesh cluster (denoted by $\Cn\ N$) in this sample tessellation is defined by
\[
\Cn\ N = \left\{A\in X: \scl\ A\ \sn\ N\right\}\ \mbox{(Vorono\"{i} mesh nucleus cluster)}.\\
\]
That is, a nucleus mesh cluster $\Cn\ N$ is a collection of nonempty sets $A$ whose closure is strongly near the cluster nucleus $N$ (in that case, each $A\in \Cn\ N$ has points in common with $N$).  For example, the set of points in the convex polygon $N$ in Fig.~\ref{fig:nucleusCluster} has points in common with each of the adjacent polygons, {\em i.e.}, each polygon adjacent to $N$ has an edge in common with $N$.  Let $B$ be a polygon adjacent to $N$.  $B\ \sn\ N$, since $B$ and $N$ have in edge in common.  
\qquad \textcolor{blue}{\Squaresteel}
\end{example}

A \emph{concrete} (\emph{physical}) set $A$ of points $p$ that are described by their location and physical characteristics, {\em e.g.}, gradient orientation (angle of the tangent to $p$.  Let $\varphi(p)$ be the gradient orientation of $p$.   For example, each point $p$ with coordinates $(x,y)$ in the concrete subset $A$ in the Euclidean plane is described by a feature vector of the form $(x,y,\varphi(p(x,y))$. Nonempty concrete sets $A$ and $B$ have descriptive strong proximity (denoted $A\ \snd\ B$), provided $A$ and $B$ have points with matching descriptions.
In a region-based, descriptive proximity extends to both abstract and concrete sets~\cite[\S 1.2]{Peters2016ComputationalProximity}.  For example, every subset $A$ in the Euclidean plane has features such as area and diameter.  Let $(x,y)$ be the coordinates of the centroid $m$ of $A$.  Then $A$ is described by feature vector of the form $(x,y,area, diameter)$.  Then regions $A,B$ have descriptive proximity (denoted $A\ \snd\ B$), provided $A$ and $B$ have matching descriptions.

The notion of strongly proximal regions extends to convex sets.  A nonempty set $A$ is a \emph{convex set} (denoted $\cv A$), provided, for any pair of points $x,y\in A$, the line segment $\overline{xy}$ is also in $A$.  The empty set $\emptyset$ and a one-element set $\left\{x\right\}$ are convex by definition.  Let $\mathscr{F}$ be a family of convex sets.  From the fact that the intersection of any two convex sets is convex~\cite[\S 2.1, Lemma A]{Edelsbrunner2014}, it follows that
\[
\mathop{\bigcap}\limits_{A\in\mathscr{F}} A\ \mbox{is a convex set}.
\]
Convex sets $\cv A, \cv B$ are strongly proximal (denote $\cv A \sn\ \cv B$), provided $\cv A, \cv B$ have points in common.  Convex sets $\cv A, \cv B$ are descriptively strongly proximal (denoted $\cv A \snd\ \cv B$), provided $\cv A, \cv B$ have matching descriptions.

%The setting for mesh clusters is a Lodato proximity space $(X,\near)$ and a descriptive Lodato space $(X,\dnear)$.  Let $N$ denote the nucleus of a mesh cluster.  Let $X$ be a set of Vorono\"{i} regions that cover a plane surface, $2^X$ be a collection of Vorono\"{i} regions, $A\in 2^X$, a single Vorono\"{i} region.  

Let $X$ be a Vorono\"{i} tessellation of a plane surface equipped with the strong proximity $\sn$ and descriptive strong proximity $\snd$ and let $A,N\in X$ be Vorono\"{i} regions.   The pair $\left(X,\left\{\sn,\snd\right\}\right)$ is an example of a proximal relator space~\cite{Peters2016relator}.
The two forms of nucleus clusters (ordinary nucleus cluster denoted by $\Cn$) and descriptive nucleus clusters are examples of mesh nerves~\cite[\S 1.10, pp. 29ff]{Peters2016ComputationalProximity}, defined by
\begin{align*}
%\cl\ A &= \left\{x\in X: x\ \near\ A\right\}\ \mbox{(spatial  closure of $A$)}.\\
%\cld\ A &= \left\{x\in X: x\ \dnear\ A\right\}\ \mbox{(descriptive closure of $A$)}.\\
%\scl\ A &= \left\{x\in X: x\ \sn\ A\right\}\ \mbox{(descriptive closure of $A$)}.\\
%\dscl\ A &= \left\{x\in X: x\ \snd\ A\right\}\ \mbox{(descriptive closure of $A$)}.\\
\Cn N &= \left\{A\in X: A\ \sn\ N\right\}\ \mbox{(nucleus cluster)}.\\
%\Csn\ N &= \left\{A\in 2^X: \scl\ A\ \sn\ N\right\}\ \mbox{(maximal nucleus cluster)}.\\
\Cdn N  &= \left\{A\in X: A\ \snd\ N\right\}\ \mbox{(descriptive nucleus cluster)}.
\end{align*}

A nucleus cluster is \emph{maximal} (denoted by $\Cmn N$), provided $N$ has the highest number of adjacent polygons in a tessellated surface (more than one maximal cluster in the same mesh is possible).  Similarly, a descriptive nucleus cluster 
is maximal (denoted by $\Cmdn N$), provided $N$ has the highest number of polygons in a tessellated surface descriptively near $N$, {\em i.e.}, the description of each $A\in \Cmdn N$ matches the description of nucleus $N$ and the number of polygons descriptively near $N$ is maximal (again, more than one $\Cmdn N$ is possible in a Vorono\"{i} tessellation).\\
\vspace{3mm}

\begin{figure}[!ht]
\centering
\includegraphics[width=65mm]{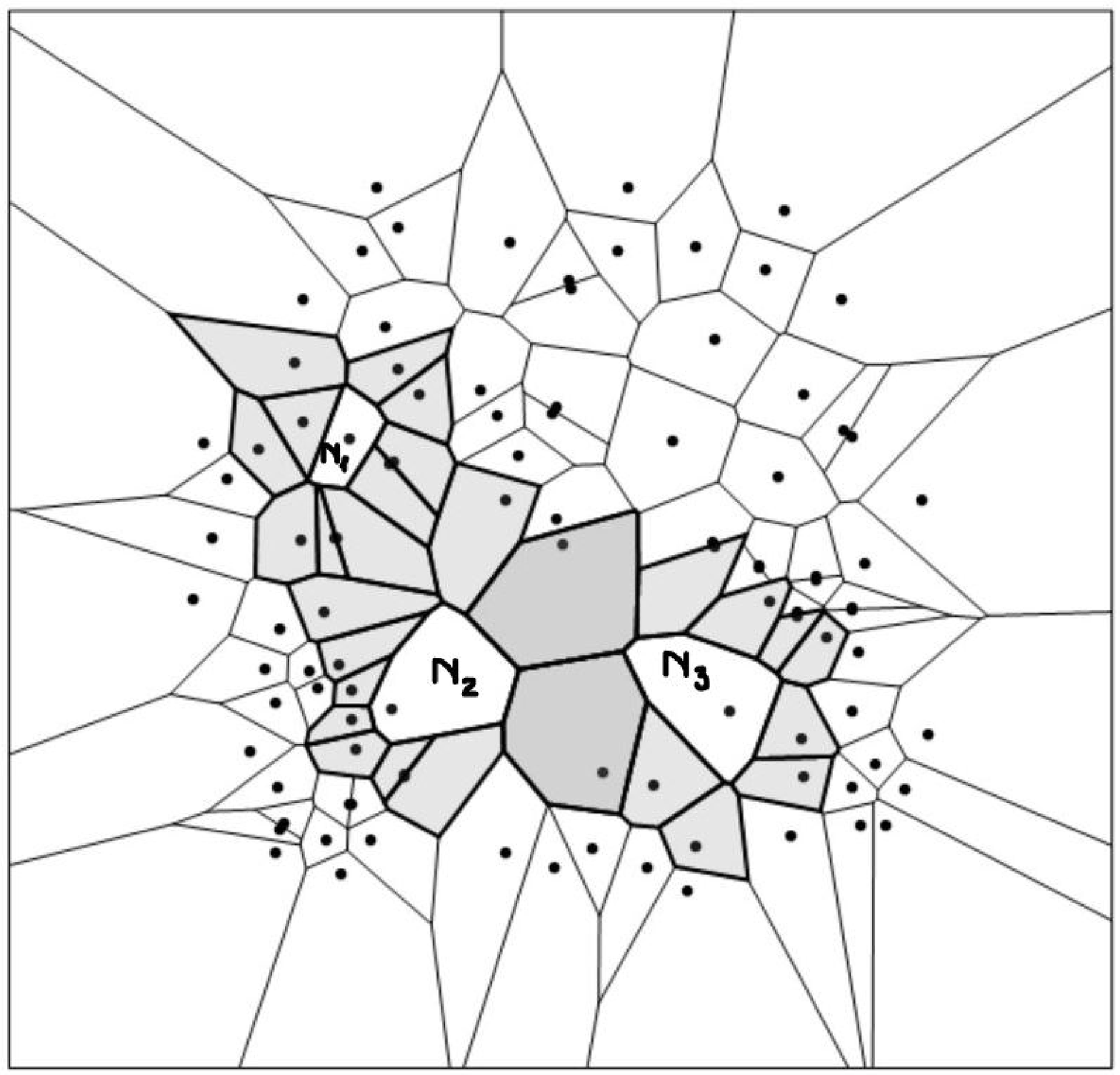}
\caption[]{ $\Cn\ N_1\ \sn\ \Cn\ N_2\ \mbox{and}\ \Cn\ N_2\ \sn\ \Cn\ N_3$}
\label{fig:N1N2N3maxClusters}
\end{figure}

\begin{example}
Let $X$ the collection of Vorono\"{i} regions shown in Fig.~\ref{fig:N1N2N3maxClusters} with $N_1,N_2,N_3\in X$. In addition, let $2^X$ be the family of all subsets of Vorono\"{i} regions in $X$.  Then $\Cn N_1,\Cn N_2,\Cn N_3\in 2^X$ nucleus clusters in the tessellation.
In this sample plane surface tessellation, $\Cn N_1\ \sn\ \Cn N_2$, since $A\ \sn\ B$ for some $A\in \Cn N_1, B\in \Cn N_2$.
Similarly, $\Cn N_2\ \sn\ \Cn N_3$.  In addition, nucleus clusters $\Cn N_2,\Cn N_3$ are maximal (denoted by $\Cmn N_2,\Cmn N_3$), since nuclei $N_2,N_3$ in the tessellation have the maximal number of adjacent Vorono\"{i} regions, namely, 10 adjacent regions.  Let the description of a nucleus cluster in the Euclidean plane be described by its number of sides of its nucleus.  Then $\Cmn N_2\ \snd\ \Cmn N_3$, since $N_2\ \snd\ N_3$, {\em i.e.}, the description of $N_2$ strongly matches the description of $N_3$ inasmuch as the description of the one nucleus is contained in the description of the other nucleus.  In a more complete description, we would also consider the gradient orientation of the nucleus edges. In the case, $N_2\ \snd\ N_3$, provided each nucleus has at least one edge with a gradient orientation that matches the gradient orientation of an edge in the other nucleus.  
\qquad \textcolor{blue}{\Squaresteel}
\end{example}

\begin{theorem}\label{thm:nucleusCluster}
Let $X$ be a set of Vorono\"{i} regions in the tessellation of a plane surface, endowed with the proximities $\sn,\snd,\dnear$ with $A,N\in X$.  In addition, let $2^X$ be the family of all subsets of Vorono\"{i} regions in $X$.  Then
\begin{compactenum}[1$^o$]
\item $A\in X$ implies $A\in \Cn N$ for some $N\in X$.
\item $\Cn N \in 2^X$ implies $A\ \sn\ N$ for some $A\in X$.
\item The union of all Vorono\"{i} nucleus clusters cover a plane surface, {\em i.e.},
\[
X = \mathop{\bigcup}\limits_{N\in X} \Cn N\ \mbox{(Nucleus cluster covering property)}.
\]
\item $N,N'\in X$ implies $\Cn N,\Cn N'\in 2^X$.
\item Let the description of $N\in X$ be the number of edges on the polygon $N$.  Then\\
$\Cmn N, \Cmn N'\in 2^X$ implies $N\ \snd\ N'$ for $N,N' \in X$.
\item\label{prop:sn} $\Cn N\ \sn\ \Cn N'$, if and only $A\ \sn\ B$ for some $A\in \Cn N, B\in \Cn N'$.
\item\label{prop:snd} $\Cn N\ \snd\ \Cn N'$, if and only if $A\ \snd\ B$ for some $A\in \Cn N, B\in \Cn N'$.
\item $A\ \sn\ \Cn\ B$, for $A\in X, \Cn\ B\in 2^X$ implies $\Cn N\ \sn\ \Cn B$ for some $N\in X$, where $A\ \sn\ N$.
\item $\Cn N\ \cap\ \Cn N'\neq \emptyset$ implies $A\ \sn\ B$ for some $A\in \Cn N, B\in \Cn N'$.
\item Let $\Cn N\ \dcap\ \Cn N' = \left\{A\in \Cn N \cup \Cn N': A\in \Phi(\Cn N)\ \&\ A\in \Phi(\Cn N')\right\}$ (descriptive intersection of nucleus clusters).  Then
$\Cn N\ \dcap\ \Cn N'\neq \emptyset$ implies $A\ \dnear\ B$ for some $A\in \Cn N, B\in \Cn N'$.
\end{compactenum}
\end{theorem}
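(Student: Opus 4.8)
\emph{Proof plan.} The ten assertions fall into three groups, and the plan is to treat them group by group. The first group, $1^o$--$4^o$, records that every region lies in its own cluster and that clusters are subsets of $X$. The second group, $6^o$--$10^o$, says that strong (descriptive) nearness of two nucleus clusters is always witnessed by a single pair of member regions. The third ``group'' is the lone geometric statement $5^o$.

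For $1^o$--$4^o$ I would first observe that $\sn$ is reflexive on nonempty sets: any Voronoï region $A$ is a nonempty convex polygon, so $A$ has points in common with itself and $A\ \sn\ A$, whence $A\in\Cn A$ by the definition $\Cn N=\{B\in X: B\ \sn\ N\}$. This gives $1^o$ with the witness $N=A$. Reading the same fact the other way, $N\in\Cn N$, so $\Cn N$ is never empty and $A=N$ satisfies $A\ \sn\ N$, which is $2^o$. For $3^o$: $\Cn N\subseteq X$ for every $N$ directly from the definition, so $\bigcup_{N\in X}\Cn N\subseteq X$, while the reverse inclusion is exactly $1^o$; hence $X=\bigcup_{N\in X}\Cn N$. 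Finally $4^o$ is the restatement that $\Cn N\subseteq X$, i.e.\ $\Cn N\in 2^X$.

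For $6^o$--$10^o$ I would unwind strong nearness of clusters as ``the unions of the two families have points in common,'' so that $\Cn N\ \sn\ \Cn N'$ means $\bigcup\Cn N$ and $\bigcup\Cn N'$ meet. Then a common point $x$ of these unions lies in some $A\in\Cn N$ and some $B\in\Cn N'$, so $x\in A\cap B$ and $A\ \sn\ B$; conversely a common point of such $A$ and $B$ is a common point of the two unions. This proves $6^o$, and $7^o$ is verbatim the same argument with ``point in common'' replaced by ``point with matching description'' throughout. Statement $9^o$ is the special case in which the witnessing pair is $A=B=C$ for any $C\in\Cn N\cap\Cn N'$ (a Voronoï region, hence nonempty, so $C\ \sn\ C$), and $10^o$ is its descriptive sibling: if $C\in\Cn N\ \dcap\ \Cn N'$ then the description of $C$ lies in both $\Phi(\Cn N)$ and $\Phi(\Cn N')$, so there are $A\in\Cn N$ and $B\in\Cn N'$ whose descriptions each match that of $C$, hence match each other, giving $A\ \dnear\ B$. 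Statement $8^o$ reduces to $6^o$ via $1^o$: take $N=A$; then $A\in\Cn N$, the hypothesis $A\ \sn\ \Cn B$ says $A$ meets some member of $\Cn B$, so $6^o$ yields $\Cn N\ \sn\ \Cn B$, while $A\ \sn\ N$ holds trivially.

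The statement $5^o$ is the one that uses the geometry of the tessellation, and I expect it to be the main obstacle. The input I would invoke is that in a Voronoï tessellation of a plane surface each region $V(s)$ is a convex polygon whose edges are in bijection with its adjacent regions: every edge of $V(s)$ is the common boundary with exactly one neighbour, and every neighbour shares exactly one edge. Consequently the description of a nucleus $N$ --- the number of edges of the polygon $N$ --- equals the number of Voronoï regions adjacent to $N$. If both $\Cmn N$ and $\Cmn N'$ are maximal, then $N$ and $N'$ attain the same (maximal) number of adjacent regions, hence have the same number of edges, hence the same description; therefore $N\ \snd\ N'$, since the description of one nucleus matches --- indeed equals --- that of the other. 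The reason this part is the least routine is that it requires pinning down conventions: one must fix that ``adjacent'' means ``shares an edge'' (so contact at a single vertex does not count), handle possibly unbounded cells when the surface is all of $E$, and make the edge/neighbour correspondence precise enough that the equality of edge counts is a genuine consequence rather than a tacit hypothesis.
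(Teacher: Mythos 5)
Your proposal is correct and, on the only two parts the paper actually proves ($6^o$ and $7^o$), it takes essentially the same route: unwinding $\Cn N\ \sn\ \Cn N'$ into the existence of a witnessing pair $A\in\Cn N$, $B\in\Cn N'$ with points (resp.\ descriptions) in common. The paper dismisses the remaining eight parts as ``direct consequences of the definitions''; your arguments for $1^o$--$4^o$ and $8^o$--$10^o$ (reflexivity of $\sn$, hence $N\in\Cn N$, plus the obvious inclusions $\Cn N\subseteq X$) are exactly the intended ones, and are cleaner than the paper's terse chain of equivalences in $7^o$, which as printed even contains a circular step. For $5^o$ you are more careful than the paper, which offers no argument at all: the edge--neighbour bijection for Vorono\"{i} polygons is precisely the fact needed to convert ``maximal number of adjacent regions'' into ``equal number of edges,'' and the conventions you flag (edge-adjacency versus vertex contact, unbounded cells) are genuinely left tacit in the paper.
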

\begin{proof} We prove only \ref{prop:sn}$^o$ and \ref{prop:snd}$^o$.  The proof of the remaining parts are direct consequences of the definitions.\\
\ref{prop:sn}$^o$:  $A\ \sn\ B$ ($A$ and $B$ have a common edge) for some $A\in \Cn N, B\in \Cn N'$, if and only if $\Cn N, \Cn N'$ are adjacent, if and only if $\Cn N\ \sn\ \Cn N'$.\\
\ref{prop:snd}$^o$: $A\ \snd\ B$ for some $A\in \Cn N, B\in \Cn N'$, if and only if the description of $A$ matches the description of $B$ ($A,B$ can be either adjacent or non-adjacent), if and only if $\Cn N\ \cap\ \Cn N'\neq \emptyset$, if and only if,  $A\ \snd\ B$.
\end{proof}

\begin{remark}
Let $X$ be a set of Vorono\"{i} regions in the tessellation of a plane surface, endowed with the proximities $\sn,\snd,\dnear$ with $A,N\in X$. 
From Theorem~\ref{thm:nucleusCluster}.\ref{prop:sn}, the nuclei in adjacent Vorono\"{i} nucleus clusters have a strong affinity in the sense that each of the clusters contains a Vorono\"{i} region that is strongly near a Vorono\"{i} region in an adjacent cluster.  For example, in Fig.~\ref{fig:N1N2N3maxClusters}, clusters $\Cn N_1,\Cn N_2$ share a pair of adjacent polygons.    The nuclei in adjacent Vorono\"{i} nucleus clusters have a strong descriptive affinity, provided the nuclei have matching descriptions.  It also the case that Vorono\"{i} regions $V(s),V(s')\in X$ are descriptively near, provided $s\ \dnear\ s'$, {\em i.e.}, the description of $s$ matches the description of $s'$.  Hence, from Theorem~\ref{thm:nucleusCluster}.\ref{prop:snd}, $\Cn V(s)\ \snd\ \Cn V(s')$.
\qquad \textcolor{blue}{\Squaresteel}
\end{remark}

\section{Main Results}

\begin{lemma}\label{lem:sn}
$A\ \sn \ B \Rightarrow A\ \snd \ B$.
\end{lemma}
\begin{proof}
$A\ \sn \ B$ implies that $A$ and $B$ have points in common.  Hence, there are points in $A$ and $B$ with the same descripitons, {\em i.e.}, $A\ \snd\ B$
\end{proof}

\begin{theorem}
$\Cn\ N\ \sn \ \Cn\ M \Rightarrow \Cn\ N\ \snd \ \Cn\ M$.
\end{theorem}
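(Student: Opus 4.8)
The plan is to chain together three facts already available in the paper: the characterization of strong nearness of nucleus clusters in Theorem~\ref{thm:nucleusCluster}.\ref{prop:sn}, Lemma~\ref{lem:sn}, and the characterization of descriptive strong nearness of nucleus clusters in Theorem~\ref{thm:nucleusCluster}.\ref{prop:snd}. So the argument is essentially a three-step implication chase rather than anything requiring new geometric input.

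First I would invoke Theorem~\ref{thm:nucleusCluster}.\ref{prop:sn}: the hypothesis $\Cn\ N\ \sn\ \Cn\ M$ yields Vorono\"{i} regions $A\in \Cn\ N$ and $B\in \Cn\ M$ with $A\ \sn\ B$, i.e.\ $A$ and $B$ share points (in fact a common edge). Next I would apply Lemma~\ref{lem:sn} to this particular pair, obtaining $A\ \snd\ B$ — the shared points have, trivially, matching descriptions, so the two regions are descriptively strongly near. Finally I would feed $A\ \snd\ B$ with $A\in \Cn\ N$, $B\in \Cn\ M$ back into Theorem~\ref{thm:nucleusCluster}.\ref{prop:snd}, which gives exactly $\Cn\ N\ \snd\ \Cn\ M$, completing the proof.

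I do not anticipate a genuine obstacle here; the only thing to be careful about is that Theorem~\ref{thm:nucleusCluster}.\ref{prop:snd} is an \emph{iff}, so the backward direction (from the existence of a descriptively near pair to $\Cn\ N\ \snd\ \Cn\ M$) is legitimately available and is precisely the direction needed. One could also remark that the conclusion is strictly weaker than the hypothesis in the sense that descriptive strong nearness may hold between nucleus clusters that are not spatially adjacent, so the implication is not reversible — but that observation belongs in a remark rather than the proof itself.

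\begin{proof}
By Theorem~\ref{thm:nucleusCluster}.\ref{prop:sn}, $\Cn\ N\ \sn\ \Cn\ M$ implies that $A\ \sn\ B$ for some $A\in \Cn\ N$ and $B\in \Cn\ M$. By Lemma~\ref{lem:sn}, $A\ \sn\ B$ implies $A\ \snd\ B$. Hence $A\ \snd\ B$ for some $A\in \Cn\ N, B\in \Cn\ M$, and so, by Theorem~\ref{thm:nucleusCluster}.\ref{prop:snd}, $\Cn\ N\ \snd\ \Cn\ M$.
\end{proof}
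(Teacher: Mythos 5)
Your proof is correct and follows essentially the same route as the paper, which simply states the result is immediate from Lemma~\ref{lem:sn} and Theorem~\ref{thm:nucleusCluster}.\ref{prop:snd}; you merely make explicit the intermediate use of Theorem~\ref{thm:nucleusCluster}.\ref{prop:sn} to extract the pair $A\ \sn\ B$, which the paper leaves implicit.
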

\begin{proof}
Immediate from Lemma~\ref{lem:sn} and Theorem~\ref{thm:nucleusCluster}.\ref{prop:snd}.
\end{proof}

 The descriptive intersection~\cite[\S 1.9, p. 43]{Peters2014book} of nonempty sets $A,B$ (denoted $A\ \ \dcap\ B$) in an $n$-dimensional Euclidean space $\mathbb{R}^n$ is defined in the following way.
\begin{description}
\item[{\rm\bf ($\boldsymbol{\Phi}$)}] $\Phi(A) = \left\{\Phi(x)\in\mathbb{R}^n: x\in A\right\}$, set of feature vectors.
\item[{\rm\bf ($\boldsymbol{\dcap}$)}]  $A\ \dcap\ B = \left\{x\in A\cup B: \Phi(x)\in \Phi(A)\ \&\ \Phi(x)\in \Phi(B)\right\}$.
\qquad \textcolor{blue}{$\blacksquare$}
\end{description}

That is, the descriptive intersection of $A$ and $B$ contains all $a\in A,b\in B$ that are descriptively near each other. 

\begin{theorem}
$\Cdn N\ \sn \ \Cdn M \Leftrightarrow A\ \dcap \ B\neq \emptyset$ for some $A\in \Csnd\ N, B\in \Csnd\ M$.
\end{theorem}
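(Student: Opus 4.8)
The plan is to reduce the cluster-level equivalence to a one-line fact about individual Vorono\"i regions and then lift it back. First I would record the region-level identity: for $A,B\in X$, $A\ \dcap\ B\neq\emptyset$ if and only if $A\ \snd\ B$. This is immediate from unwinding the definition of $\dcap$: a point $x\in A\ \dcap\ B$ is by definition a point of $A\cup B$ whose feature vector $\Phi(x)$ lies in both $\Phi(A)$ and $\Phi(B)$, which exhibits a point of $A$ and a point of $B$ with matching descriptions---precisely the defining condition for $A\ \snd\ B$---and the converse reads off the same definition. No geometry is used here, only the two displayed clauses defining $\Phi$ and $\dcap$ together with Lemma~\ref{lem:sn} (to go from shared points to shared descriptions where needed).

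Next I would pass to the clusters. Since $\Cdn N=\{A\in X:A\ \snd\ N\}$ and the closure-decorated cluster $\Csnd N$ differs from $\Cdn N$ only by stabilising under the descriptive strong closure---which adjoins to a convex polygon only boundary points already carrying features present in the polygon, leaving $\Phi$ of the region unchanged---membership in $\Csnd N$ and in $\Cdn N$ may be used interchangeably for deciding $\snd$ and $\dcap$. With that identification, the governing principle is exactly the one behind Theorem~\ref{thm:nucleusCluster}.\ref{prop:sn}--\ref{prop:snd}: two descriptive nucleus clusters are near precisely when some region of the one is descriptively near some region of the other. Chaining this with the region-level identity yields, in both directions, $\Cdn N\ \sn\ \Cdn M \Leftrightarrow A\ \snd\ B$ for some members $\Leftrightarrow A\ \dcap\ B\neq\emptyset$ for some $A\in\Csnd N,\ B\in\Csnd M$. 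For $(\Leftarrow)$ I would begin with a nonempty $A\ \dcap\ B$, extract a common feature vector, deduce $A\ \snd\ B$, and climb back up via Theorem~\ref{thm:nucleusCluster}.\ref{prop:snd}; the $(\Rightarrow)$ direction runs the same steps in reverse.

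The main obstacle is not a calculation but a matter of fixing conventions: one must pin down exactly what ``$\Cdn N\ \sn\ \Cdn M$'' is to mean here and confirm that the descriptive reading of Theorem~\ref{thm:nucleusCluster}.\ref{prop:snd} is available in the needed form---namely that proximity of descriptive clusters is controlled by the descriptive near relation between their members rather than by literal point-intersection---since otherwise the $(\Leftarrow)$ direction, where a shared description need not force a shared point, would fail. Once this bookkeeping is settled, the proof is a two-step unwinding of definitions resting only on Lemma~\ref{lem:sn} and Theorem~\ref{thm:nucleusCluster}.
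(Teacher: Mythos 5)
Your proof follows essentially the same route as the paper's: both reduce the statement to the definitional identity $A\ \snd\ B \Leftrightarrow A\ \dcap\ B\neq\emptyset$ and use Theorem~\ref{thm:nucleusCluster}.\ref{prop:snd} to pass between cluster-level and region-level proximity, differing only in the order in which the two links of the chain are established. Your added care in reconciling $\Csnd$ with $\Cdn$ and $\Cn$ addresses a notational looseness that the paper's own proof silently passes over, but the substance of the argument is identical.
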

\begin{proof}
$\Cdn N\ \sn \ \Cdn M\Leftrightarrow \Cn N\ \snd\ \Cn M\ \mbox{(from the definition of $\snd$)}\ \Leftrightarrow A\ \snd\ B$ for some $A\in \Cn N, B\in \Cn M$ (from Theorem~\ref{thm:nucleusCluster}.\ref{prop:snd}), if and only if $A\ \dcap \ B\neq \emptyset$.
\end{proof}

\begin{definition}\label{def:Convexity}{Zelins'kyi-Soltan-Kay-Womble Convexity Structure}{\rm \cite{Soltan1984convexity,Zelinskii2015convexity,Kay1971convexitySpace}}.
Let $\mathscr{F} = 2^X$ be the family of all subsets of a nonempty set $X$ and let subfamilies $\mathscr{A},\mathscr{B}\in\mathscr{F}$.  The family $\mathscr{F}$ on $X$ is called a \emph{Zelins'kyi-Soltan-Kay-Womble convexity structure}, provided it satisfies the following axioms.
\begin{description}
\item[{\rm(C0)}] $\emptyset$ and $X$ belong to $\mathscr{F}$.
\item[{\rm(C1)}] $\mathscr{A}\ \cap\ \mathscr{B} \in \mathscr{F}$ for all subfamilies $\mathscr{A},\mathscr{B}\in\mathscr{F}$. 
 \qquad \textcolor{blue}{$\blacksquare$}
\end{description}
\end{definition} 

\noindent The pair $\left(X,\mathscr{F}\right)$ is a \emph{Zelins'kyi-Soltan-Kay-Womble convexity space}.

\begin{theorem}\label{thm:FamilyOfSetsConvexity}{\rm \cite{Peters2016ComputationalProximity}}
The family of all subsets $\mathscr{F} = 2^X$ of a nonempty set $X$ is a Zelins'kyi-Soltan-Kay-Womble convexity structure.
\end{theorem}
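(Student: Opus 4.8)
The plan is to verify the two axioms (C0) and (C1) of Definition~\ref{def:Convexity} directly for $\mathscr{F} = 2^X$, where $X$ is a nonempty set. This is essentially a bookkeeping exercise about the power set, so the proof should be short and the main ``obstacle'' is only making sure the quantifiers match the stated axioms.

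First I would check (C0). By definition of the power set, every subset of $X$ belongs to $2^X$; in particular $\emptyset \subseteq X$ and $X \subseteq X$, so $\emptyset \in \mathscr{F}$ and $X \in \mathscr{F}$. This gives (C0) immediately.

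Next I would check (C1). Let $\mathscr{A}, \mathscr{B} \in \mathscr{F} = 2^X$ be arbitrary, so $\mathscr{A} \subseteq X$ and $\mathscr{B} \subseteq X$. Then $\mathscr{A} \cap \mathscr{B} \subseteq \mathscr{A} \subseteq X$, hence $\mathscr{A} \cap \mathscr{B}$ is again a subset of $X$ and therefore $\mathscr{A} \cap \mathscr{B} \in 2^X = \mathscr{F}$. Since $\mathscr{A}, \mathscr{B}$ were arbitrary, this holds for all subfamilies in $\mathscr{F}$, which is exactly (C1).

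Having verified both axioms, I would conclude that $\mathscr{F} = 2^X$ is a Zelins'kyi-Soltan-Kay-Womble convexity structure, and hence $\left(X, 2^X\right)$ is a Zelins'kyi-Soltan-Kay-Womble convexity space. The only subtlety worth a remark is that closure under intersection of \emph{two} members (rather than arbitrary intersections) is all that the definition demands, and the argument above trivially extends to any finite intersection by induction; no use of the proximities $\sn$, $\snd$, or $\dnear$ is needed here, since the statement is purely set-theoretic.
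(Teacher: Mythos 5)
Your proof is correct and follows essentially the same route as the paper: verify (C0) by noting $\emptyset$ and $X$ are subsets of $X$, and verify (C1) by noting the power set is closed under intersection. If anything, your version is slightly more careful than the paper's, which checks closure under an arbitrary intersection $\bigcap_{A\in\mathcal{A}}A$ for $\mathcal{A}\in\mathscr{F}$ rather than the pairwise intersection that axiom (C1) literally states.
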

\begin{proof}
Let $\mathcal{A}\in \mathscr{F}$.  $X$ and $\emptyset$ are in $\mathscr{F}$.  In addition, $\mathop{\bigcap}\limits_{A\in\mathcal{A}} A\in \mathscr{F}$.  Hence, $\mathscr{F}$ is a Zelins'kyi-Soltan-Kay-Womble convexity structure.
\end{proof}

\begin{theorem}\label{thm:VoronoiConvexityStructure}
Let $X$ be a collection of Vorono\"{i} regions in the tessellation of a plane surface, $2^X$ the family of all subsets of $X$, $\Cn N,\Cn M\in 2^X$ such that $\Cn N\cap\Cn M\neq \emptyset$.  The family $2^X$ is a Zelins'kyi-Soltan-Kay-Womble convexity structure.
\end{theorem}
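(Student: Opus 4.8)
The plan is to verify directly that $2^X$ satisfies the two axioms (C0) and (C1) in Definition~\ref{def:Convexity}, invoking Theorem~\ref{thm:FamilyOfSetsConvexity} as the essential engine. First I would observe that the underlying set $X$ is nonempty: a Vorono\"{i} tessellation of a plane surface contains at least one region $V(s)$ for each mesh-generating point $s\in S$, and by part~$1^o$ of Theorem~\ref{thm:nucleusCluster} every such region lies in some nucleus cluster $\Cn N$, so $X$ is nonempty and the clusters $\Cn N,\Cn M$ named in the hypothesis are genuine members of $2^X$ (this is exactly part~$4^o$ of Theorem~\ref{thm:nucleusCluster}). Consequently, axiom (C0) holds at once, since both $\emptyset$ and $X$ are subsets of $X$ and hence belong to $\mathscr{F}=2^X$.

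Next I would check axiom (C1). Let $\mathscr{A},\mathscr{B}\in\mathscr{F}$ be arbitrary subfamilies, i.e.\ subsets of $X$. Then $\mathscr{A}\cap\mathscr{B}$ is again a subset of $X$, hence $\mathscr{A}\cap\mathscr{B}\in 2^X=\mathscr{F}$; more generally, for any subfamily $\mathcal{A}\subseteq\mathscr{F}$ the intersection $\bigcap_{A\in\mathcal{A}}A$ remains a subset of $X$ and therefore lies in $\mathscr{F}$. In particular, taking $\mathscr{A}=\Cn N$ and $\mathscr{B}=\Cn M$, the hypothesis $\Cn N\cap\Cn M\neq\emptyset$ guarantees that this intersection is a nonempty member of $2^X$, and by part~$9^o$ of Theorem~\ref{thm:nucleusCluster} it in fact witnesses a pair of strongly near regions $A\in\Cn N$, $B\in\Cn M$. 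Either way, the closure conditions (C0) and (C1) hold, so $2^X$ is a Zelins'kyi-Soltan-Kay-Womble convexity structure and $\left(X,2^X\right)$ the corresponding convexity space.

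I do not anticipate any substantive obstacle: the statement is a specialization of Theorem~\ref{thm:FamilyOfSetsConvexity} to the concrete set $X$ of Vorono\"{i} regions. The only point requiring a word of care is the bookkeeping that the nucleus clusters $\Cn N,\Cn M$ really are elements of the power set $2^X$ rather than external objects, together with the observation that the extra hypothesis $\Cn N\cap\Cn M\neq\emptyset$, while consistent with (C1), is not actually needed for the convexity-structure conclusion, since even the empty intersection $\emptyset$ already belongs to $2^X$.
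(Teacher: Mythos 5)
Your proof is correct and follows essentially the same route as the paper's: verify (C0) and (C1) directly for $2^X$ and invoke Theorem~\ref{thm:FamilyOfSetsConvexity}. Your added observation that the hypothesis $\Cn N\cap\Cn M\neq\emptyset$ is not actually needed (since $\emptyset\in 2^X$ anyway) is accurate and, if anything, slightly sharper than the paper's argument, which only checks (C1) for the particular pair $\Cn N,\Cn M$ rather than for arbitrary subfamilies as the axiom requires.
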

\begin{proof}
For a nonempty $X$, both $\emptyset$ and $X$ are subsets in $2^X$ (Axiom (C0)).  Let $\Cn N,\Cn M$ be subcollections in $2^X$.  $\Cn N\cap\Cn M\neq \emptyset$ implies that $\Cn N,\Cn M$ share at least one Vorono\"{i} region.  Consequently,  $\Cn N\cap\Cn M\in 2^X$ (Axiom (C1)).  Hence, from Theorem~\ref{thm:FamilyOfSetsConvexity}, $2^X$ is a Zelins'kyi-Soltan-Kay-Womble convexity structure.
\end{proof}

\begin{example}
From Theorem~\ref{thm:VoronoiConvexityStructure}, the collection of Vorono\"{i} regions $\left\{\Cn N_2,\Cn N_3\right\}$ in the tessellation shown in Fig.~\ref{fig:N1N2N3maxClusters} is a convexity structure, since $\Cn N_2,\Cn N_3$ have a Vorono\"{i} region in common.
\qquad \textcolor{blue}{$\blacksquare$}
\end{example}

\section{Applications}
\noindent Several applications arise from the introduction of Vorono\"{i} clustering.
\begin{description}
\item[{\bf Satellite Images}] Detecting surface objects and locations of sharp differences in terrain.  Surface objects are revealed by one or more occurrences of maximal nucleus clusters. 
\item[{\bf FMRI Images}]  High cortical activity corresponds to maximal nucleus clusters in brain tissue.  The leads to the detection and classification of cortical activity associated with the tessellation of fMRI images.  For example, the Vorono\"{i} mesh in Fig.~\ref{fig:N1N2N3maxClusters} has been extracted from the tessellation of an fMRI image of the brain.
Mesh nucleus clustering is directly related to recent studies of fMRI images~\cite{Tozzi2016CogNeuro4Dbrain,Tozzi2016JNeuroSciSymmetries}.
\item[{\bf Tomography Images}] High concentration of fossils correspond to the presence and distribution of maximal nucleus clusters in 3D tomography images derived from drill core samples.
\end{description}

%\bibliographystyle{amsplain}
%\bibliography{NSrefs}

\end{document}